\newtheorem{thm}{Theorem}[section]
\newtheorem{lem}{Lemma}[section]
\theoremstyle{definition}
\newtheorem{exmp}{Example}[section]
\newtheorem{con}{Conjecture}[section]
\title{\textbf{Total, Equitable  and Neighborhood Sum Distinguishing Total Colorings of Some Classes of Circulant Graphs}}
\author{S. Prajnanaswaroopa, \ J. Geetha, \ K. Somasundaram}
\affil{Department of Mathematics, Amrita School of Engineering-Coimbatore\\ Amrita Vishwa Vidyapeetham, India.

\{s\_prajnanaswaroopa, j\_geetha, s\_ sundaram\}@cb.amrita.edu sntrm4@rediffmail.com} 
\date{}
\begin{document}
\maketitle

\noindent\textbf{Abstract:} In this paper, we have obtained  the total chromatic as well as equitable and neighborhood sum distinguishing total chromatic numbers  of some classes of the Circulant graphs. \\

\noindent\textbf{MSC Subject Classification:}  05C15, 05C25, 05B15\\
\noindent \textbf{Keywords:} Total coloring; Equitable total coloring;  Neighborhood sum distinguishing total coloring; Cayley Graphs;  Circulant graphs; Power of Cycles.

\section{Introduction}

Let $G=(V(G),E(G))$ be a graph with the sets of vertices $V(G)$ and edges $E(G)$ respectively. All the graphs considered here are finite, simple connected and undirected graphs. Total coloring of a simple graph is a coloring of the vertices and the edges such that any adjacent vertices or edges or edges and their incident vertices do not receive the same color. The \textit{total chromatic number}  of a graph \textit{G}, denoted by $ \chi''(G) $, is the minimum number of colors that suffice in a total coloring. It is clear that $ \chi''(G) \geq \Delta (G)+1 $, where $\Delta(G)$ is the maximum degree of \textit{G}.  \\

The Total Coloring Conjecture (TCC) (Behzad \cite{BEH} and Vizing \cite{VGV} independently proposed) is a famous open problem in graph theory  that states that any simple graph $G$ with maximum degree $\Delta(G)$ can be totally colored with at most $\Delta(G)+2$ colors. The progress on this conjecture had been low in the initial years but several advancements are in progress \cite{GNS}. The graphs that can be totally colored in $\Delta(G)+1$ colors are said to be type I and the graphs with the total chromatic number $\Delta(G)+2$ is said to be type II. \\

Equitable total coloring is a total coloring of a graph such that any two  color classes of the total coloring differ by at most one element. The minimum colors required in such a coloring of the graph is called the equitable total chromatic number, denoted by $\chi''_{=}(G)$. Fu \cite{FU} conjectured  that  
for every graph $G$, $G$ has an equitable total $k$-coloring for each $k \geq \text{ max } \{ \chi''(G), \Delta(G) +2 \}$. Later Wang \cite{WANG} proposed a weaker conjecture that $\chi''_{=}(G) \leq \Delta(G) + 2 .$\\

A proper total $k$ coloring of $G$ is called the neighborhood sum distinguishing total coloring if $\sum_{c}(u) \neq \sum_{c} (v)$ for each edge $uv$, where $\sum_c(u)$ is the sum of the color of the  vertex $u$ and the colors of edges incident with $u$.
The minimum number of colors required is called the neighborhood sum distinguishing total chromatic number, denoted by $\chi''_{\Sigma}(G)$. Pilsniak and Wozniak \cite{PIL} conjectured that $\Delta(G)+2\le\chi''_{\Sigma}(G)\le\Delta(G)+3$ for any graph $G$.\\

In an assignment of colors to the edges of a graph, a rainbow matching is a set of independent edges such that each edge in the independent set has a different color. 
For example, we can color the edges of any even cycle $C_{{2n}}$ such that it has a rainbow perfect matching (an $n-$ coloring of the edges with each perfect matching receiving $n$ colors is a rainbow coloring). \\

A Cayley graph, Cay$(G, S)$, for a group $H$ with symmetric generating set $S\subset H$  is a simple graph with vertices as all the elements of the group; and edges between each two elements of the form $g$ and $gs$, where $g\in H, s\in S$. Note that a symmetric subset implies $s\in S \implies s^{-1}\in S$. The set $S$ is also supposed to not have the identity element of the group. A circulant graph is a Cayley graph on the group $H=\mathbb{Z}_n$, the cyclic group on $n$ vertices. A power of cycle $C_n^k$ is a circulant graph on $\mathbb{Z}_n$ with generating set $\{1,2,\ldots,k,n-k,\ldots,n-2,n-1\}$.  \\

In other words,  given a sequence of positive integers $1\leq d_1<d_2<...<d_l\leq  \lfloor \frac{n}{2} \rfloor$, the circulant graph
    $G=C_n(d_1,d_2,...,d_l)$ has vertex set $V=Z_n=\{0,1,2,...,n-1\}$, two vertices $x$ and $y$ being adjacent iff $x=(y\pm
    d_i) \mod n$ for some $i, 1\leq i\leq l$ and a graph is a power of cycle, denoted $C^k_n$, $n$ and $k$ are integers, $1\leq k<\lfloor\frac{n}{2}\rfloor$, if
    $V(C^k_n)=\{v_0,v_1,...,v_{n-1}\}$ and $E(C^k_n)=E^1 \cup E^2 \cup ...\cup E^k$, where
    $E^i=\{e_0^i,e_1^i,...,e^i_{n-1}\}$ and $e_j^i=(v_j,v_{(j+i) \mod \ n })$, $0 \leq j \leq n-1$ and $1\leq i\leq k$.

    Campos and de Mello \cite {CAM} 
    verified the TCC for power of cycle $C_n^k, n$ even and
    $2<k<\frac{n}{2}.$ Also, they showed that one  can obtain a
    $\Delta(C_n^k)+2$-total coloring for these graphs in polynomial time. They also proved that $C_n^k$ with $n\cong 0 \mod (\Delta(C_n^k)+1)$ are
    type-I and they proposed the following conjecture.  
    \begin{con}
    Let $G=C_n^k$, with $2\leq k< \lfloor \frac{n}{2}
        \rfloor$. Then,

        $\chi''(G)=\begin{cases}\Delta(G)+2, & \text{ if }k>\frac{n}{3}-1    \text{ and } n  \text{ is \ odd}\\ \Delta(G)+1,  &
        \text{ otherwise}.  
    \end{cases}$

    \end{con}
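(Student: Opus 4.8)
The plan is to prove the two directions separately: a lower bound showing $\chi''(G)\ge \Delta(G)+2$ exactly in the stated range, and matching upper-bound constructions giving $\chi''(G)\le\Delta(G)+1$ (resp. $\Delta(G)+2$) in the complementary cases. Throughout, note that for $2\le k<\lfloor n/2\rfloor$ the graph $C_n^k$ is $2k$-regular, so $\Delta(G)=2k$ and $\chi''(G)\ge\Delta(G)+1$ trivially; also the scope constraint $k<\lfloor n/2\rfloor$ forces $n\ge 2k+3$ whenever $n$ is odd, a fact I will use to exclude the complete-graph case $C_{2k+1}^k=K_{2k+1}$.

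For the lower bound (the type-II case), first I would record the standard structural consequence of a $(\Delta+1)$-total coloring of a $\Delta$-regular graph: at every vertex the vertex itself together with its $\Delta$ incident edges exhausts all $\Delta+1$ colors, so each color $c$ occurs exactly once in every closed neighborhood. Hence the vertices colored $c$ form an independent set $V_c$, and the edges colored $c$ form a matching saturating precisely the vertices outside $V_c$; this yields $|V_c|+2|E_c|=n$, so $|V_c|\equiv n\pmod 2$ for every color. Combining this with the independence-number computation $\alpha(C_n^k)=\lfloor n/(k+1)\rfloor$ is the key step. In the regime $n$ odd and $k>\tfrac n3-1$ one checks $2k+3\le n\le 3k+2$, whence $\alpha(C_n^k)=2$; since $n$ is odd each $|V_c|$ is odd and $\le 2$, forcing $|V_c|=1$ for all $2k+1$ colors and therefore $n=\sum_c|V_c|=2k+1$, contradicting $n\ge 2k+3$. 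Thus no $(\Delta+1)$-total coloring exists and $\chi''(G)\ge\Delta(G)+2$; together with the TCC bound $\chi''\le\Delta+2$, known for powers of cycles, this gives equality.

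For the upper bound in the remaining (``otherwise'') cases, the goal is an explicit $(\Delta+1)$-total coloring whenever $n$ is even or $k\le \tfrac n3-1$ (equivalently $\alpha(C_n^k)\ge 3$). The approach I would take has two stages. First, build a conformable vertex coloring: a partition of $V=\mathbb{Z}_n$ into $2k+1$ independent sets each of size congruent to $n$ modulo $2$, using residue classes modulo $k+1$ as the basic independent sets and redistributing a bounded number of vertices to repair parities. Second, extend this to the edges by exploiting the $\mathbb{Z}_n$-symmetry: since rotation by $1$ is an automorphism, it suffices to specify a cyclically repeating pattern (a Latin-square type rule indexed by the edge differences $1,\dots,k$) on a fundamental block and verify that the colors around each vertex, together with the chosen vertex colors, use each of the $\Delta+1$ colors once. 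For $n\equiv 0\pmod{\Delta+1}$ and for $n$ even with $2<k<n/2$ this can be imported directly from Campos and de Mello \cite{CAM}; the new content is the odd case with $\alpha\ge 3$ and the small-$k$ boundary values.

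The main obstacle I anticipate is the edge-extension step performed uniformly in $n\bmod(k+1)$ and $n\bmod(2k+1)$. When the relevant modulus does not divide $n$, the cyclic pattern fails to close up consistently at the wrap-around, and the conformability/parity bookkeeping must be reconciled with an actual matching decomposition of the color classes; this is exactly where a single clean construction breaks into many residue-dependent subcases and where ad hoc patching is needed. Controlling this boundary behavior simultaneously for all admissible $(n,k)$ is the crux, and is the reason the statement has so far resisted a complete proof.
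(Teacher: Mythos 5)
You are attempting to prove a statement that the paper records only as an open conjecture of Campos and de Mello \cite{CAM}: the paper contains no proof of it, and its own results (Theorem 2.1 in particular) establish only special cases, namely TCC under the divisibility hypothesis $(k+i)\mid n$ for some $1\le i\le k+1$, with type I when additionally $n$ is even. Your attempt does not close the conjecture either, and the gaps are concrete. What you do prove correctly is the non-conformability lower bound: for $n$ odd, $k>\frac{n}{3}-1$ and $k<\lfloor\frac{n}{2}\rfloor$ one indeed gets $2k+3\le n\le 3k+2$, hence $\alpha(C_n^k)=\lfloor n/(k+1)\rfloor=2$; the parity relation $|V_c|+2|E_c|=n$ for each color class of a hypothetical $(2k+1)$-total coloring then forces $|V_c|=1$ for all $2k+1$ colors, giving $n=2k+1$ and a contradiction, so $\chi''(G)\ge\Delta(G)+2$. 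That argument is sound. But the conjectured \emph{equality} also needs the upper bound $\chi''(G)\le\Delta(G)+2$, i.e.\ TCC, in exactly this regime, and your claim that TCC is ``known for powers of cycles'' is false there: Campos and de Mello prove TCC only for $n$ even; the paper's Theorem 2.1 needs $(k+i)\mid n$; and general high-degree results (e.g.\ Hilton--Hind, which needs $\Delta\ge\frac{3}{4}n$, i.e.\ $k\ge\frac{3}{8}n$) leave the window $\frac{n}{3}-1<k<\frac{3}{8}n$ uncovered. So even the type-II half of the dichotomy is not finished by your argument.

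The type-I (``otherwise'') direction is where the proposal genuinely fails. You propose to import the case ``$n$ even, $2<k<\frac{n}{2}$'' directly from Campos and de Mello, but their theorem for even $n$ yields only a $(\Delta+2)$-total coloring, not a $(\Delta+1)$-total coloring; they obtain type I only when $n\equiv 0 \pmod{2k+1}$. Hence general even $n$ is exactly as open as odd $n$ with $k\le\frac{n}{3}-1$, and for both your plan (conformable vertex classes built from residues mod $k+1$, then a cyclically repeating Latin-square edge extension) is precisely the construction that breaks at the wrap-around whenever neither $k+1$ nor $2k+1$ divides $n$ --- the failure you yourself identify and describe as ``the reason the statement has so far resisted a complete proof.'' That concession is accurate, but it means your submission is a proof of one known inequality plus a research program for everything else, not a proof of the conjecture; no amount of appeal to $\mathbb{Z}_n$-symmetry by itself supplies the missing edge-extension argument, which is the actual open content of the problem.
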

  Geetha et al. \cite{Gee} proved the Campos and de Mello’s conjecture for some classes of powers of cycles. Also, they verify the TCC for complement of powers of cycles.\\
  
 A Latin square is an $n\times n$ array consisting of $n$ entries of numbers (or symbols) with each row and column containing only one instance of each element. This means the rows and columns are permutations of one single $n$ vector with distinct entries. A Latin square is said to be commutative if it is symmetric.  A Latin square containing numbers is said to be idempotent if each diagonal element contains the number equal to its row (column) number. In addition, if the rows of the Latin square are just cyclic permutations (one-shift of the elements to the right) of the previous row, then the Latin square is said to be circulant (anti-circulant, if the cyclic permutations are actually left shifts), the matrix (corresponds to the Latin square) can be generated from a single row vector.  The Latin square

\begin{center}

\begin{tabular}{|c|c|c|c|c|c|c|}\hline1&k+2&2&k+3&\ldots&2k+1&k+1\\\hline k+2&2&k+3&3&\ldots&k+1&1\\\hline\ldots&\ldots&\ldots&\ldots&\ldots&\ldots&\ldots\\\hline\ldots&\ldots&\ldots&\ldots&\ldots&\ldots&\ldots\\\hline k+1&1&k+2&2&\ldots&k&2k+1\\
\hline
\end{tabular}

\end{center}

\noindent is    anti-circulant, commutative and idempotent.  
The entries of the square are as follows:
\[ L=(l_{ij})=\begin{cases} m,\ \text{ if }\ i+j=2m\\k+1+m, \  \text{ if } i+j=2m+1. \end{cases}\]

From the above, it can be easily seen that the Latin square corresponding to the matrix $L$ is commutative, idempotent and also anti-circulant.
The following lemma is due to Stong \cite{STO} (Corollary 2.3.1).
\begin{lem}
 If $G$ is abelian and has even order then Cay$(G, S)$ is 1-factorizable. 
\end{lem}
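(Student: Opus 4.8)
The plan is to prove the equivalent statement that the $|S|$-regular graph $\mathrm{Cay}(G,S)$ admits a proper edge-coloring with exactly $|S|$ colors, since for a regular graph such a coloring is precisely a $1$-factorization. Because a $1$-factorization is assembled component by component and every component of $\mathrm{Cay}(G,S)$ is a Cayley graph of $\langle S\rangle$, it suffices to treat the connected case, so I assume $\langle S\rangle=G$. The key structural device is an index-$2$ subgroup $K\le G$: since $|G|$ is even, $\mathrm{Hom}(G,\mathbb{Z}_2)$ is nontrivial, so a surjection $\phi\colon G\to\mathbb{Z}_2$ exists and $K=\ker\phi$ has index $2$. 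This splits the generators into the \emph{inner} ones $S\cap K$ and the \emph{crossing} ones $S\setminus K$, and correspondingly splits the edges: inner edges stay inside a coset of $K$, while crossing edges run between $K$ and $G\setminus K$. Connectivity guarantees at least one crossing generator.

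First I would dispose of the crossing edges. They form a $d'$-regular bipartite graph between the cosets $K$ and $G\setminus K$, where $d'=|S\setminus K|$, and by K\"onig's edge-coloring theorem every regular bipartite graph decomposes into $d'$ perfect matchings; each such matching meets both cosets, hence is a $1$-factor of the whole graph. The inner edges form two disjoint isomorphic copies of $\mathrm{Cay}(K,S\cap K)$, one on each coset. All components of this Cayley graph are cosets of $\langle S\cap K\rangle$, so they share a common order $|\langle S\cap K\rangle|$, giving a clean dichotomy. If this order is even, each inner copy is $1$-factorizable by induction on $|G|$ (the subgroup is abelian of even order and strictly smaller), and pairing the $i$-th factor of one copy with the $i$-th factor of the other yields $|S\cap K|$ full $1$-factors; together with the K\"onig factors this settles the even case.

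The hard part, and the step I expect to be the main obstacle, is when $|\langle S\cap K\rangle|$ is odd, so that each inner copy is a regular graph of odd order. Such a graph has no perfect matching at all, so its chromatic index is $\Delta_{\mathrm{in}}+1$ with $\Delta_{\mathrm{in}}=|S\cap K|$; the inner edges cannot be $1$-factored in isolation and must borrow a color from a crossing generator. To arrange this I would fix one crossing generator $s_0$ and use the matching $M_0=\{\{g,g+s_0\}\colon g\in K\}$, a perfect matching between the cosets realizing the translation $g\mapsto g+s_0$. I would color one inner copy with an optimal $(\Delta_{\mathrm{in}}+1)$-edge-coloring, in which every vertex misses exactly one color, and color the other copy by the $s_0$-translate of that coloring; since $\mathrm{Cay}(K,S\cap K)$ is translation invariant, the matched endpoints $g$ and $g+s_0$ of each rung of $M_0$ then miss the \emph{same} color, so $M_0$ can be colored edge by edge with exactly those missing colors. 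This fuses the two odd ``class-two'' copies into a single $1$-factorizable graph on $\Delta_{\mathrm{in}}+1$ colors, after which the remaining crossing edges form a $(d'-1)$-regular bipartite graph that K\"onig splits into $d'-1$ further $1$-factors, for a total of $(\Delta_{\mathrm{in}}+1)+(d'-1)=|S|$ colors. The delicate point is precisely this synchronization of the missing colors across the two cosets, so that a single crossing matching supplies the deficiency of every odd component at once; that no naive separation of colors can succeed is already visible in the smallest instance, the triangular prism $C_3\,\square\,K_2=\mathrm{Cay}(\mathbb{Z}_6,\{2,3,4\})$, where every $1$-factor is forced to mix fiber edges with crossing edges.
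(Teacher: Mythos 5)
Your proof is correct, but there is nothing in the paper to compare it against line by line: the paper does not prove this lemma at all, it imports it from Stong \cite{STO} (Corollary 2.3.1 there). Your argument is therefore a genuinely different, self-contained route. I checked the crux, the odd case, and it holds: in any proper $(\Delta_{\mathrm{in}}+1)$-edge-coloring of the $\Delta_{\mathrm{in}}$-regular inner copy on $K$ every vertex misses exactly one color (existence by Vizing), translation by $s_0$ is an isomorphism between the two inner copies, so the endpoints $g$ and $g+s_0$ of each rung of $M_0$ miss the same color, and coloring each rung with its common missing color turns the inner edges together with $M_0$ into a $(\Delta_{\mathrm{in}}+1)$-regular graph properly edge-colored with $\Delta_{\mathrm{in}}+1$ colors, whose color classes are then automatically perfect matchings; K\"onig splits the remaining $(|S\setminus K|-1)$-regular crossing graph, giving $|S|$ factors in total, and the even case is a routine induction on $|G|$ applied to the connected components $\mathrm{Cay}(\langle S\cap K\rangle, S\cap K)$. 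One sentence of yours should be deleted, though: the opening reduction ``it suffices to treat the connected case'' is invalid as reasoned, because when $\langle S\rangle\neq G$ the components may have odd order, where the statement is simply false --- $\mathrm{Cay}(\mathbb{Z}_6,\{2,4\})$ is a disjoint union of two triangles and has no perfect matching at all. This costs you nothing, since the paper (like Stong) defines $\mathrm{Cay}(G,S)$ only for generating sets $S$, so $\langle S\rangle=G$ is a hypothesis of the lemma rather than a case to be reduced away. As for what each approach buys: citing Stong keeps the paper short and connects to stronger theorems covering several classes of non-abelian groups as well, whereas your proof is elementary (Vizing, K\"onig, and an index-$2$ induction, with abelianness plus even order used only to guarantee the index-$2$ subgroup) and constructive, so it could in principle be unwound into the kind of explicit color matrices that the rest of the paper builds by hand.
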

\section{Results on Powers of Cycles}
\begin{thm} Let $G=C_n^k$ be a power of cycle graphs and $(k+i)|n$ for some $1\le i\le k+1$. Then $G$ satisfies TCC. In particular $G$ is type I if $n$ is even.
\end{thm}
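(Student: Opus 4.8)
The plan is to write down an explicit total coloring whose vertex colors and edge colors are both read off the idempotent, commutative, anti-circulant Latin square $L$ of order $2k+1$ displayed above, and then to repair a sparse, periodic set of ``boundary'' edges. Since the definition forces $k<\lfloor n/2\rfloor$, the graph $G=C_n^k$ is $2k$-regular and $\Delta(G)+1=2k+1$. I will use the standard fact that, for a regular graph, a total coloring with $\Delta+1$ colors is the same thing as a proper edge coloring with $\Delta+1$ colors whose \emph{missing-color} map $v\mapsto$ (the unique color absent from the edges at $v$) is a proper vertex coloring; the goal is therefore to build such an edge coloring. Write $d=k+i$, so that $k+1\le d\le 2k+1$ and $d\mid n$, put $q=n/d$, and split $V$ into the $q$ consecutive blocks $B_t=\{v_{td},\dots,v_{td+d-1}\}$. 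The structural observation that drives everything is that $C_d^{\,k}=K_d$ for every $d$ in this range: every nonzero residue modulo $d$ lies within cyclic distance $k$ (if $x>k$ then $d-x\le i-1\le k$), which is exactly why the order-$(2k+1)$ Latin square, indexed by residues modulo $d$, governs the interactions inside a block.

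Concretely, I color $v_j$ by $c(v_j)=(j\bmod d)$ and, for an edge $v_av_b$ of short difference $e=b-a\in\{1,\dots,k\}$ with $r=(a\bmod d)\in\{0,\dots,d-1\}$, I assign
\[
 \gamma(v_av_b)=\big(r+(k+1)e\big)\bmod(2k+1),
\]
which is the entry $L_{r,\,r+e}$; commutativity makes this well defined on the undirected edge and idempotence makes the diagonal reproduce the vertex colors. Because $d\mid n$ this assignment has period $d$ and so closes up consistently around the cycle. Using $\gcd(k+1,2k+1)=1$ and $-(k+1)\equiv k\pmod{2k+1}$, a one-line check shows that at any vertex the forward colors $r+(k+1)e$ and the backward colors $r-(k+1)e$ together run over all residues except $r$, so this is a genuine proper total coloring, with missing color $c(v_j)$ at $v_j$, for every edge that stays inside a block. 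The only edges that misbehave are the cross-block (``seam'') edges, namely those with $r+e\ge d$: reducing the lower endpoint modulo $d$ shifts their color by exactly $+d$, and there are precisely $\binom{k+1}{2}$ of them at each of the $q$ seams.

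The heart of the proof is thus the repair of this periodic family of seam edges. For odd $n$ I would simply recolor them using one extra color $2k+2$: the defective edges can be thrown into the new color class as a matching, which restores properness and gives $\chi''(G)\le\Delta(G)+2$, i.e.\ TCC. For even $n$ the aim is instead to reabsorb the seam edges into the original $2k+1$ colors, and this is exactly where the $1$-factorization lemma (Lemma~1.1) enters: as $G=\mathrm{Cay}(\mathbb{Z}_n,S)$ is abelian of even order it is $1$-factorizable, so the seam edges, together with suitable block edges, can be reorganized into perfect matchings that slot back into the $2k+1$ color classes while keeping the missing-color map a proper vertex coloring. I expect the main obstacle to be the global consistency of this recoloring as one travels once around the cyclic chain of $q=n/d$ seams: recoloring one seam forces the next, and the cycle of forced changes closes inside $2k+1$ colors precisely when $n$ is even, whereas for odd $n$ closing it is what costs the extra color. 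Verifying that the even case always closes is the technical core; the only degenerate case, $q=1$, forces $n=2k+1$ (hence $n$ odd), where $G=K_{2k+1}$ is type~I directly from $L$, and when $i=k+1$ the whole construction specializes to the known $(2k+1)\mid n$ coloring of Campos and de Mello.
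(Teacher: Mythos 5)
Your setup through the identification of the seam edges is sound, but both of your repair steps fail, and they fail exactly where the real work lies. The odd-$n$ repair is wrong: the defective seam edges do not form a matching. At the seam between two consecutive blocks, the first vertex $v_{td}$ of a block is the upper endpoint of all $k$ cross-block edges $v_{td-e}v_{td}$, $e=1,\dots,k$ (and symmetrically $v_{td-1}$ is the lower endpoint of $k$ of them), so the graph of defective edges has maximum degree exactly $k$. For $k\ge 2$ its $\binom{k+1}{2}$ edges per seam therefore cannot be placed in a single new color class; one extra color buys you at most one matching, while a greedy recoloring of the seam edges by fresh colors alone would need at least $k$ of them, destroying the bound $\Delta(G)+2$. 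So TCC for odd $n$ is not established by your argument. The even case is not a proof at all --- you yourself defer the ``global consistency'' of the recoloring as the unverified technical core --- and the way you invoke Lemma~1.1 cannot close it: $1$-factorizing the whole graph $G$ decomposes it into $2k$ perfect matchings, but then every vertex misses the same $(2k+1)$-st color, so the missing-color map is constant and never proper. Converting a $1$-factorization into a type~I total coloring is precisely the difficulty, not a consequence of the lemma.

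The paper avoids the seam problem rather than repairing it, and the contrast shows what is missing in your plan. It applies the order-$(k+i)$ Latin square only to the inner subgraph $C_n^{\lfloor (k+i)/2\rfloor}$, whose power is at most half the period $k+i$; there the vertex and edge colors can be taken to depend only on residues modulo $k+i$ (color of $v_av_b$ is the Latin square entry indexed by $a\bmod (k+i)$ and $b\bmod (k+i)$), which is proper because any two neighbors of a vertex within distance $\lfloor (k+i)/2\rfloor$ have distinct residues, and it closes up around the cycle since $(k+i)\mid n$ --- no defective edges ever appear. The complementary outer circulant subgraph, on the generators of absolute value larger than $\lfloor (k+i)/2\rfloor$, is then edge-colored separately with the $k-i+1$ leftover colors: by Stong's lemma (Lemma~1.1 applied to this subgraph, not to $G$) it is $1$-factorizable when $n$ is even, giving a type~I coloring, and by Vizing's theorem it needs at most one extra color when $n$ is odd, giving TCC. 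Your single-formula coloring of all differences $e\le k$ with period $k+i<2k+1$ is what creates the seams; to make your approach work you would have to restrict the Latin square formula to differences $e\le\lfloor (k+i)/2\rfloor$ and handle the remaining edges by a separate subgraph decomposition --- at which point it becomes the paper's proof.
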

\begin{proof}

\noindent Case 1. $n$ is even.\\

Let  $n\equiv0 \bmod (k+i)$ with $1\le i\le k+1$ and let us take $k+i=2m+1$. We also know that there exists a commutative idempotent Latin square of odd order, $k+i$ in this case, which we call $C'$. Now, we consider two tableau of the form\\
Tableau $B'$:\\
\\
\begin{tabular}{|c|c|c|c|c|}\hline
$k+i+1$&&&&\\
\hline$k+i+2$&$k+i+1$&&&\\
\hline$k+i+3$&$k+i+2$&$k+i+1$&&\\
\hline\ldots&\ldots&\ldots&$k+i+1$&\\
\hline$2k+1$&\ldots&\ldots&\ldots&$k+i+1$\\
\hline
\end{tabular}\\
\\

Tableau $A'$:\\
\\
\begin{tabular}{|c|c|c|c|c|}\hline
$2k+1$&$2k$&$2k-1$&$\ldots$&$k+i+1$\\
\hline &$2k+1$&$2k$&$\ldots$&$k+i+2$\\
\hline &&$2k+1$&$2k$&$\ldots$\\
\hline & & & $2k+1$ & $2k$\\
\hline & & & & $2k+1$\\
\hline
\end{tabular}




 Now, arranging the two tableau and the idempotent and commutative Latin square of order $k+1$ in the below fashion, would give us the color matrix desired, with $2k+1$ colors. The portion $C$ is the portion of the Latin square $C'$ which fits in the color matrix. 

\begin{tabular}{|c|c|c|c|c|c|}\hline
$C$&\diagbox{$B$}{}&&&&\diagbox{}{$A$}\\\hline
\diagbox{}{$B^T$\\}&C&\diagbox{$A^T$\\}{}&&&\\\hline
&\diagbox{}{$A$}&C&\diagbox{$B$}{}&&\\\hline
&&\diagbox{}{$B^T$\\}&C&\diagbox{$A^T$\\}{}&\\\hline
&&&\diagbox{}{$A$}&C&\diagbox{$B$}{}\\\hline
\diagbox{$A^T$\\}{}&&&&\diagbox{}{$B^T$\\}&$C$\\\hline\end{tabular}\\
\\
In case $i=1$, the entries of $C'$ are written wholly, so that the tableau $A'$ and $B'$ are equal to the tableau $A$ and $B$. In case $i>1$, the tableau $A'$ and $B'$ could be modified to accommodate the missed numbers in the color matrix,  which are deleted from the commutative idempotent Latin square $C'$, that is, the portion of the $k+i$ Latin square starting from the $(k+2)^{nd}$ position in the first row, $(k+3)^{rd}$  position in the second row and so on, is cut and juxtaposed on the tableau $A'$ and $B'$ to give us the tableau $A$ and $B$. In particular, if $i=k+1$, the tableau $A'$ and $B'$ are wholly replaced with the portions deleted from the Latin square $C'$. The portion deleted from the Latin square, in case $i>1$ would be $D'$ and its transpose, where $D$ given by:\\
\\
\begin{tabular}{|c|c|c|c|c|}\hline$e_{1,k+2}$&$e_{1,k+3}$&$\ldots$&$\ldots$&$e_{1,k+i}$\\\hline&$e_{2,k+3}$&$e_{2,k+4}$&$\ldots$&$e_{2,k+i}$\\\hline&&$\ldots$&$\ldots$&$e_{3,k+i}$\\\hline&&&$\ldots$&$e_{4,k+i}$\\\hline&&&&$e_{k+i,k+i}$\\\hline\end{tabular},\\
\\
where $e_{ij}$ denote the entries of the Latin square $C'$. Note that the portion $A'$ and $B'$ are actually only edge coloring of the subgraph formed after removing the edges of $C_n^{\lfloor \frac{k+i-1}{2}\rfloor}$. The circulant graph with generating set  $\{\lfloor \frac{k+i-1}{2}\rfloor+1, \lfloor \frac{k+i-1}{2}\rfloor+2,\ldots, k, n-\lfloor \frac{k+i-1}{2}\rfloor-1, n-\lfloor \frac{k+i-1}{2}\rfloor-2, \ldots, n-k \}$ is connected and the generating set is same for its group. Hence by Lemma 1.1, the circulant graph is 1-factorable. Therefore the circulant graph is  $d$ edge colorable, where $d$ is the degree of the subgraph thereby ensuring the compatibility of the various tableau and hence the total coloring of the graph.

\noindent Case 2. $n$ is odd.\\
We first give a $(k+i)$-partial total coloring of the graph that corresponds to totally coloring the subgraph $C_n^{\left\lfloor\frac{k+i}{2}\right\rfloor}$ by using the similar concept of the above case. That is, we first totally color the graph induced by the first $k+i$ vertices as in a commutative idempotent Latin square of order $k+i$ and repeat the same pattern for the remaining vertices, taken $k+i$ at a time (as in above case) and then extend it to the full total coloring by coloring the remaining edges, which can be always done in $2k-(k+i)+2=k-i+2$ colors by using the Vizing's theorem. Hence, the total  colors used is $2k+2$, which thereby verifies TCC for such graphs. 
\end{proof}
\begin{exmp}
For example, we see that the graph $C_{21}^6$ can be totally colored by first partially giving the part of the graph $C_{21}^3$ a type I total coloring and then extending it to the whole graph by giving the remaining edges a class 2 edge coloring, which is guaranteed by Vizing's theorem.The first part of the color matrix can be seen in Table 1.
\begin{table}[h]
\begin{adjustbox}{width=\columnwidth, center}
\begin{tabular}{|l|l|l|l|l|l|l|l|l|l|l|l|l|l|l|l|l|l|l|l|l|l|}\hline
   & 0  & 1  & 2  & 3  & 4  & 5  & 6  & 7  & 8  & 9  & 10 & 11 & 12 & 13 & 14 & 15 & 16 & 17 & 18 & 19 & 20 \\\hline
0  & 1  & 5  & 2  & 6  &   &  &   &    &    &    &    &    &    &    &    &    &  &  & 3  & 7  & 4  \\\hline
1  & 5  & 2  & 6  & 3  & 7  &   &  &  &    &    &    &    &    &    &    &    &    &   &  & 4  & 1  \\\hline
2  & 2  & 6  & 3  & 7  & 4  & 1  &  &  &  &    &    &    &    &    &    &    &    &    &  &  & 5  \\\hline
3  & 6  & 3  & 7  & 4  & 1  & 5  & 2  &  &  &  &    &    &    &    &    &    &    &    &    &  &   \\\hline
4  &   & 7  & 4  & 1  & 5  & 2  & 6  & 3  &  &  &  &    &    &    &    &    &    &    &    &    & \\\hline
5  &  &   & 1  & 5  & 2  & 6  & 3  & 7  & 4  &   &  &  &    &    &    &    &    &    &    &    &    \\\hline
6  &    &  &   & 2  & 6  & 3  & 7  & 4  & 1  & 5  &   &  &  &    &    &    &    &    &    &    &    \\\hline
7  &    &    &  &  & 3  & 7  & 4  & 1  & 5  & 2  & 6  &  & &  &    &    &    &    &    &    &    \\\hline
8  &    &    &    &  &   & 4  & 1  & 5  & 2  & 6  & 3  & 7  &  &  &  &    &    &    &    &    &    \\\hline
9  &    &    &    &    &  &   & 5  & 2  & 6  & 3  & 7  & 4  & 1  &   &  &  &    &    &    &    &    \\\hline
 10&    &    &    &    &    &  &  & 6  & 3  & 7  & 4  & 1  & 5  & 2  &   &  &  &    &    &    &    \\\hline
11 &    &    &    &    &    &    &  &  & 7  & 4  & 1  & 5  & 2  & 6  & 3  &   &  &  &    &    &    \\\hline
12 &    &    &    &    &    &    &    &  &  & 1  & 5  & 2  & 6  & 3  & 7  & 4  &   &  &  &    &    \\\hline
13 &    &    &    &    &    &    &    &    &  &   & 2  & 6  & 3  & 7  & 4  & 1  & 5  &   &  &  &    \\\hline
14 &    &    &    &    &    &    &    &    &    &  &   & 3  & 7  & 4  & 1  & 5  & 2  & 6  &   &  &  \\\hline
15 &    &    &    &    &    &    &    &    &    &    &  &   & 4  & 1  & 5  & 2  & 6  & 3  & 7  &   &  \\\hline
16 &  &    &    &    &    &    &    &    &    &    &    &  &   & 5  & 2  & 6  & 3  & 7  & 4  & 1  &   \\\hline
17 &  &  &    &    &    &    &    &    &    &    &    &    &  &   & 6  & 3  & 7  & 4  & 1  & 5  & 2  \\\hline
18 & 3  &  &  &    &    &    &    &    &    &    &    &    &    &  &  & 7  & 4  & 1  & 5  & 2  & 6  \\\hline
19 & 7  & 4  &  &  &    &    &    &    &    &    &    &    &    &    &  &   & 1  & 5  & 2  & 6  & 3  \\\hline
20 & 4  & 1  & 5  &   &  &    &    &    &    &    &    &    &    &    &    &  &   & 2  & 6  & 3  & 7  \\\hline
\end{tabular}
\end{adjustbox}
\caption{Partial total Coloring of $C_{21}^6$.}
\label{table:1}
\end{table}
The color matrix is then extended by giving a suitable $7$ coloring of the edges of the remaining edges( which are the edges of the Cayley graph on $\mathbb{Z}_{21}$ with generating set as $\{4,5,6,15, 16, 17\}$), which is guaranteed by Vizing's theorem. Hence, the total colors required for this graph is $7+7=14$.
\end{exmp}
 There are several classes of graphs satisfy the condition of Theorem 2.1. For example, if $n$ were highly composite, we have several classes of such graphs. 
The canonical total coloring of a complete graph of order $n$ is coloring the vertices and edges of the complete graph in a particular fashion which we describe here. First, we observe that the complete graph can be written as the circulant graph with generating set consisting of all elements of the cyclic group of order $n$ except $0$, that is $\{1, 2, \ldots, n-1\}$. Now, the total color matrix is filled as follows:\\

\noindent (i). The vertices are colored  $1-2-\ldots-n$, corresponding to filling the diagonal in the pattern $1-2-\ldots-n$.\\

\noindent (ii). For the first row of the color matrix, the colors corresponding to the generator $s_i$ is $\frac{s_i}{2}+1$ if $s_i$ is even and $\left\lceil\frac{n}{2}\right\rceil+s_i$ if $s_i$ is odd.\\

\noindent (iii). The remaining rows of the total color matrix are filled by the circulant pattern, that is rotating the first row to the left by one color cyclically.\\

Note that the above canonical coloring implies that if we have color $x$ in the first row of the total color matrix at position $s_i$, we have the color $x-s_i\pmod n$ at position $n-s_i$ in the first row.\\
 \begin{thm}
The equitable total chromatic number of a power of cycle graph, $G=C_n^k$ with $(2k+1)|n$, $n$ is even, is equal to its total chromatic number. Further, $\chi''_{\Sigma}(G) \le 2k+3=\Delta(G)+3$.
 \end{thm}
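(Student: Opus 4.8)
The plan is to treat the two assertions separately, the first being an almost immediate consequence of the construction in Theorem 2.1 together with a counting identity, the second requiring a genuine perturbation of that coloring.

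For the equitable statement, I would first invoke Theorem 2.1 with $i=k+1$: since $(2k+1)\mid n$ and $n$ is even, $G=C_n^k$ is type I, so $\chi''(G)=\Delta(G)+1=2k+1$. Because $\chi''_{=}(G)\ge\chi''(G)$ always holds, it suffices to exhibit an equitable total coloring with $2k+1$ colours, and I claim the type-I coloring produced in Theorem 2.1 is already equitable. The key point is that $G$ is $2k$-regular and the coloring uses exactly $\Delta+1=2k+1$ colours, so at every vertex the closed neighbourhood (the vertex together with its $2k$ incident edges) receives each colour exactly once. Writing $a_c$ and $b_c$ for the numbers of vertices and edges coloured $c$, a double count of the incidences of colour $c$ over all vertices gives $a_c+2b_c=n$. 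Since the construction colours the vertices by the diagonal of the order-$(2k+1)$ idempotent Latin square repeated $t:=n/(2k+1)$ times, every colour occurs exactly $t$ times on vertices, i.e. $a_c=t$; hence $b_c=(n-t)/2=kt$ and each colour class has size $a_c+b_c=(k+1)t$. All classes have equal size, so the coloring is perfectly equitable and $\chi''_{=}(G)=2k+1=\chi''(G)$.

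For the bound $\chi''_{\Sigma}(G)\le 2k+3$, the starting observation is that the equitable coloring above is useless for neighbourhood-sum distinguishing: since every closed neighbourhood sees all colours once, every vertex has the same sum $\Sigma(v)=1+2+\cdots+(2k+1)=(2k+1)(k+1)$, so no edge is distinguished. The plan is therefore to introduce the two extra colours $2k+2$ and $2k+3$ and recolour a structured family of elements so that, writing $\Sigma(v)=(2k+1)(k+1)+r(v)$, the offset $r$ becomes a proper vertex coloring of $G$, i.e. $r(u)\ne r(v)$ on every edge $uv$. Because the base coloring, the generating set, and the target offset are all invariant under the rotation $v\mapsto v+1$ and $(2k+1)\mid n$, it is enough to design the recoloring on one block of $2k+1$ consecutive vertices and repeat it; a convenient target is $r(v)\equiv v \pmod{2k+1}$, which is automatically proper on $C_n^k$ since distinct neighbours differ by at most $k<2k+1$. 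To move the sums I would use that, $n$ being even, Lemma 1.1 makes $G$ $1$-factorizable, so its edges split into $2k$ perfect matchings; recolouring edges drawn from these matchings to $2k+2$ or $2k+3$ changes $\Sigma$ only at their endpoints and, since each new colour is used at most once around every vertex and differs from every old colour and vertex colour, automatically keeps the total coloring proper.

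The main obstacle is precisely the simultaneous fulfilment of two coupled requirements: recolouring an edge $\{u,w\}$ shifts $\Sigma(u)$ and $\Sigma(w)$ by the same amount, so the offsets $r$ are not freely assignable but are linked along recoloured edges, while at the same time each of the two fresh colours may appear on at most one edge at each vertex. Thus a single perfect matching recoloured to one new colour either shifts all sums identically (distinguishing nothing) or clashes on its own edges, and the real content is to combine partial matchings in the two extra colours --- together, if needed, with colour swaps along alternating paths inside the original palette $\{1,\dots,2k+1\}$ --- so that the induced offsets realise the proper pattern $r$ without ever repeating a fresh colour at a vertex. I expect the verification that such a family exists for every admissible $n,k$, handled uniformly via the periodicity coming from $(2k+1)\mid n$ and the $1$-factorization, to be the technical heart of the argument; once the offsets form a proper coloring of $G$ the sum-distinguishing property is immediate and the total number of colours used is $2k+3=\Delta(G)+3$.
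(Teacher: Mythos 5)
Your first half is fine: the double-count $a_c+2b_c=n$ together with $a_c=t=n/(2k+1)$ (the diagonal/vertex colors repeat the idempotent Latin square's diagonal) gives every color class size $(k+1)t$, so the type-I coloring from Theorem 2.1 is already equitable. This is the same route as the paper, and in fact you justify it more explicitly than the paper's parenthetical remark about perfect matchings in each color class.

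The second half has a genuine gap, and it stems from a concrete misstatement. You claim that ``a single perfect matching recoloured to one new colour either shifts all sums identically (distinguishing nothing) or clashes on its own edges.'' That is false precisely because the matchings in question are \emph{rainbow} in the base coloring: the net shift at a vertex $v$ is $(\text{new color}) - c(e_v)$, where $c(e_v)$ is the \emph{old} color of the matching edge at $v$, and these old colors vary from edge to edge. Having talked yourself into this non-obstruction, you abandon the construction and leave ``the existence of a suitable family of partial matchings'' as an unverified technical heart --- so the proof of $\chi''_{\Sigma}(G)\le 2k+3$ is not actually completed. The paper's construction closes exactly this gap: take the Hamiltonian cycle generated by $1$ (even, since $n$ is even); its two perfect matchings are rainbow in the base coloring; recolor one with $2k+2$ and the other with $2k+3$. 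Then the new sum at $v$ is $\mathrm{const} - (c_{v-1}+c_v)$, where $c_{v-1},c_v$ are the removed colors of the two cycle edges at $v$. From the Latin square pattern, $c_v \equiv k+1+v \pmod{2k+1}$, so $c_{v-1}+c_v \equiv 2v + \mathrm{const} \pmod{2k+1}$, and since $\gcd(2,2k+1)=1$ these pair-sums are pairwise distinct over the residues mod $2k+1$. Any two adjacent vertices of $C_n^k$ differ by at most $k<2k+1$, hence lie in distinct residue classes and receive distinct sums; properness of the recoloring is automatic, as you note, because each new color appears exactly once at each vertex and differs from all old colors. This single periodic computation is what replaces the case-by-case existence question you left open.
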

 \begin{proof}
 Let $G=C_n^k$ and $n=2m(2k+1).$ From the case 1 of the previous Theorem, the graph $C_n^k$ is type I and it is easy to see that the equitable total chromatic of $C_n^k$ is equal to its total chromatic number (we remove the vertices in each color classes, the remaining graphs have a perfect matching).\\

Secondly, we see that the edges of the hamiltonian cycle formed by the generator $1$  in the totally colored graph have two rainbow perfect matchings. We then give two new colors to the two rainbow perfect matchings, which will then give us a different sum of the colors incident at each vertex for each adjacent pair of vertices (as the row/column sum for any consecutive $2k+1$ rows/ columns will be different in the total color matrix). Hence the number of colors used in such a coloring is $2k+1+2=2k+3$. However, it may be possible to further reduce the number of total colors required by $1$, a possibility which we cannot rule out. Hence, the neighbourhood sum distinguishing total chromatic number, $\chi''_{\Sigma} (G) \le2k+3$.
 \end{proof}
\begin{exmp}

Let us consider the total coloring of the graph $C_{18}^4$. In this case , we have $2(4)+1=9|18$ and the graph is type I. We could write its total color matrix as in Table 2.
\begin{table}[h]
\begin{adjustbox}{width=\textwidth}
\begin{tabular}{|c|c|c|c|c|c|c|c|c|c|c|c|c|c|c|c|c|c|c|}
\hline&0&1&2&3&4&5&6&7&8&9&10&11&12&13&14&15&16&17\\\hline0&1&6&2&7&3&&&&&&&&&&8&4&9&5\\\hline1&6&2&7&3&8&4&&&&&&&&&&9&5&1\\\hline2&2&7&3&8&4&9&5&&&&&&&&&&1&6\\\hline3&7&3&8&4&9&5&1&6&&&&&&&&&&2\\\hline4&3&8&4&9&5&1&6&2&7&&&&&&&&&\\\hline5&&4&9&5&1&6&2&7&3&8&&&&&&&&\\\hline6&&&5&1&6&2&7&3&8&4&9&&&&&&&\\\hline7&&&&6&2&7&3&8&4&9&5&1&&&&&&\\\hline8&&&&&7&3&8&4&9&5&1&6&2&&&&&\\\hline9&&&&&&8&4&9&5&1&6&2&7&3&&&&\\\hline10&&&&&&&9&5&1&6&2&7&3&8&4&&&\\\hline11&&&&&&&&1&6&2&7&3&8&4&9&5&&\\\hline12&&&&&&&&&2&7&3&8&4&9&5&1&6&\\\hline13&&&&&&&&&&3&8&4&9&5&1&6&2&7\\\hline14&8&&&&&&&&&&4&9&5&1&6&2&7&3\\\hline15&4&9&&&&&&&&&&5&1&6&2&7&3&8\\\hline16&9&5&1&&&&&&&&&&6&2&7&3&8&4\\\hline17&5&1&6&2&&&&&&&&&&7&3&8&4&9\\\hline
\end{tabular}

\end{adjustbox}
\caption{Equitable total coloring of $C_{18}^4$.}
\label{table:2}
\end{table}
\\
We modify the two rainbow perfect matchings formed by the generating elements $\{1,17\}$ with two new colors as explained in the theorem to give a coloring such that the neighbourhood sum distinguishing total chromatic number is $11$ from the total chromatic number $9$. Hence, the  Table 2 could be modified  to Table 3.
\begin{table}
\begin{adjustbox}{width=\textwidth}

\begin{tabular}{|c|c|c|c|c|c|c|c|c|c|c|c|c|c|c|c|c|c|c|}
\hline&0&1&2&3&4&5&6&7&8&9&10&11&12&13&14&15&16&17\\\hline0&1&10&2&7&3&&&&&&&&&&8&4&9&11\\\hline1&10&2&11&3&8&4&&&&&&&&&&9&5&1\\\hline2&2&11&3&10&4&9&5&&&&&&&&&&1&6\\\hline3&7&3&10&4&11&5&1&6&&&&&&&&&&2\\\hline4&3&8&4&11&5&10&6&2&7&&&&&&&&&\\\hline5&&4&9&5&10&6&11&7&3&8&&&&&&&&\\\hline6&&&5&1&6&11&7&10&8&4&9&&&&&&&\\\hline7&&&&6&2&7&10&8&11&9&5&1&&&&&&\\\hline8&&&&&7&3&8&11&9&10&1&6&2&&&&&\\\hline9&&&&&&8&4&9&10&1&11&2&7&3&&&&\\\hline10&&&&&&&9&5&1&11&2&10&3&8&4&&&\\\hline11&&&&&&&&1&6&2&10&3&11&4&9&5&&\\\hline12&&&&&&&&&2&7&3&11&4&10&5&1&6&\\\hline13&&&&&&&&&&3&8&4&10&5&11&6&2&7\\\hline14&8&&&&&&&&&&4&9&5&11&6&10&7&3\\\hline15&4&9&&&&&&&&&&5&1&6&10&7&11&8\\\hline16&9&5&1&&&&&&&&&&6&2&7&11&8&10\\\hline17&11&1&6&2&&&&&&&&&&7&3&8&10&9\\\hline
\end{tabular}

\end{adjustbox}
\caption{Neighborhood sum distinguishing total coloring of $C_{18}^4$.}
\label{table:3}
\end{table}
\end{exmp}
 \section{Results on Circulant Graphs}

In this section, we obtain upper bound of total coloring for certain classes of circulant graphs.

\begin{thm} Let $G$ be a circulant graph with even order $n$, $\Delta(G) \ge\frac{n}{2}$, and the generating set $S$ satisfying the following properties: \\
\noindent (i)The set does not have the involution element $\frac{n}{2}.$\\
(ii)The set has a subset $S_1$ of the form $\{1,2,3\ldots,\frac{n}{4},n-\frac{n}{4},\ldots n-1\}$ (which is a generating set of the power of cycle $C_n^\frac{n}{4}$).  \\
(iii) The complement of the subset $S_1$ in $S$ generates the whole group.\\
Then $G$ satisfies the TCC.
\end{thm}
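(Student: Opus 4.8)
The plan is to use the decomposition of the generating set that hypothesis (ii) hands us: write $S=S_1\sqcup S_2$ with $S_1=\{1,\dots,\tfrac n4,\,n-\tfrac n4,\dots,n-1\}$ and $S_2=S\setminus S_1$. Correspondingly $E(G)$ splits as the edges of the power of cycle $C_n^{n/4}$ (generated by $S_1$) together with the edges of the circulant graph $H=\mathrm{Cay}(\mathbb Z_n,S_2)$. The strategy is to totally color $C_n^{n/4}$ by the machinery of Theorem 2.1, which already handles all vertices of $G$ and all edges whose generator lies in $S_1$, and then to finish off the chords of $H$ by a proper edge-coloring that uses only fresh colors.

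First I would color $C_n^{n/4}$. Taking $k=\tfrac n4$ and $i=\tfrac n4$ gives $k+i=\tfrac n2$, and since $\tfrac n2\mid n$ with $1\le i\le k+1$, Theorem 2.1 applies (here $n$ is even), producing a total coloring of $C_n^{n/4}$ in at most $\Delta(C_n^{n/4})+2=\tfrac n2+2$ colors drawn from a palette $\{1,\dots,\tfrac n2+2\}$. The crucial structural feature I would record is that the construction is circulant, so its vertex-coloring (the diagonal of the color matrix) is periodic with period $\tfrac n2$; in particular two vertices get the same color precisely when their indices are congruent modulo $\tfrac n2$.

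Next I would color $H$. By hypothesis (iii) the set $S_2$ generates $\mathbb Z_n$, so $H$ is connected, and being a Cayley graph on the abelian group $\mathbb Z_n$ of even order it is $1$-factorizable by Lemma 1.1; hence its edges admit a proper edge-coloring in exactly $\deg(H)=|S|-\tfrac n2$ colors. I would assign these edges $|S|-\tfrac n2$ entirely new colors, say $\{\tfrac n2+3,\dots,|S|+2\}$. Gluing the two colorings together yields a total coloring of $G$ in $|S|+2=\Delta(G)+2$ colors: at each vertex the incident $S_1$-edges remain mutually distinct and distinct from the vertex color (inherited from the coloring of $C_n^{n/4}$), while the $S_2$-edges carry distinct fresh colors disjoint from everything already used, so no two incident objects collide.

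The one genuinely delicate point, which I expect to be the main obstacle, is the propriety of the vertex coloring across the new chords. For $s\in S_1$ adjacent vertices automatically receive different colors, but for $s\in S_2$ I must ensure $\mathrm{color}(x)\neq\mathrm{color}(x+s)$, and this is exactly where hypothesis (i) is used: with the $\tfrac n2$-periodic vertex coloring, a monochromatic edge could only occur for $s\equiv 0\pmod{\tfrac n2}$, i.e.\ for $s=\tfrac n2$, and the assumption $\tfrac n2\notin S$ excludes this. The real work, therefore, is to justify that the total coloring of $C_n^{n/4}$ supplied by Theorem 2.1 can be taken with a genuinely $\tfrac n2$-periodic vertex coloring; once that periodicity is verified, the collision check is immediate and the bound $\chi''(G)\le\Delta(G)+2$ follows, so $G$ satisfies the TCC.
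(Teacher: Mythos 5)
Your overall strategy coincides with the paper's: split $S$ into $S_1$ and $S_2=S\setminus S_1$, totally color the power of cycle $C_n^{n/4}$ with $\frac{n}{2}+2$ colors, then use hypothesis (iii) together with Lemma 1.1 to properly edge-color the Cayley graph on $S_2$ with $\Delta(G)-\frac{n}{2}$ fresh colors, for a total of $\Delta(G)+2$. The only sourcing difference is that the paper invokes the Campos--de Mello result for the power-of-cycle part, whereas you invoke Theorem 2.1. To your credit, you isolate a point the paper's proof passes over in silence: adjacent vertices of $G$ joined by an $S_2$-generator must still receive distinct colors, an arbitrary total coloring of $C_n^{n/4}$ does not guarantee this, and hypothesis (i) (which the paper's written proof never uses at all) is exactly what must rescue it --- provided the vertex coloring of the power-of-cycle part has period $\frac{n}{2}$.

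However, your proof is incomplete at precisely this point, and the route you propose for closing it does not go through. You need the vertex classes to be the pairs $\{j,\,j+\frac{n}{2}\}$, and you propose to extract this periodicity from the construction in Theorem 2.1 with $k=i=\frac{n}{4}$, i.e.\ $k+i=\frac{n}{2}$. But hypothesis (ii) forces $4\mid n$ (otherwise $\frac{n}{4}$ is not an integer and $S_1$ is undefined), so $\frac{n}{2}$ is even, while Case 1 of Theorem 2.1 builds its diagonal blocks from a commutative idempotent Latin square of order $k+i$, and such squares exist only in odd orders (as the paper itself notes; for even order, idempotence puts each symbol on the diagonal exactly once, while symmetry forces each symbol onto the diagonal an even number of times). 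So the construction whose diagonal you want to read the periodicity off of cannot even be run for $k+i=\frac{n}{2}$. Falling back to a smaller odd divisor $k+i<\frac{n}{2}$ does not help: the resulting vertex coloring has period $k+i$, and a multiple of $k+i$ may well lie in $S_2$, a collision that hypothesis (i) no longer excludes. To complete the argument you would need either to verify that the Campos--de Mello coloring (or a modification of it) has the $\frac{n}{2}$-periodic vertex coloring, or to construct directly a $\bigl(\frac{n}{2}+2\bigr)$-total coloring of $C_n^{n/4}$ whose vertex classes are exactly the pairs $\{j,\,j+\frac{n}{2}\}$. Neither your write-up nor, for that matter, the paper's own proof does this; the ``delicate point'' you correctly identified remains open.
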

\begin{proof}

We bifurcate the total color matrix into two parts, first one consisting of partial total coloring of the graph induced by the generating set $\{1,2,3,\ldots,\frac{n}{4},\ldots,n-1\}$. The second part consisting of edge coloring of the remaining edges. The bifurcation of the graph is done by using the generating set $S_1$ and which is a power of cycle. The second graph is induced by $\overline{S_1}$. \\

Here, the first part of the total color matrix requires only $\frac{n}{2}+2$ colors by using the result of Campos and de Mello \cite {CAM}. From Lemma 1.1, the graph induced by the remaining edges form a Cayley graph and it is 1-factoriziable.  Therefore we can color the remaining edges with $\Delta(G)-\frac{n}{2}$ colors. This gives us the full total coloring of the original circulant graph using $\Delta(G)-\frac{n}{2} + \frac{n}{2}+2=\Delta(G) + 2$ colors thus satisfying TCC.
\end{proof}
\begin{exmp}
Let us consider the total coloring of the cayley graph on $\mathbb{Z}_{20}$ with generating set $\{1,2,3,4,5,7,8,12,13,15,16,17,18,19\}$. Here, we have degree of the graph equal to $14$ . The first part consists of the graph corresponding to the generating subset $\{1,2,3,4,5,15,16,17,18,19\}$. We can follow the process given in $\cite{CAM}$ to give a total $12$ coloring of the subgraph induced by this subset. Since the complement of the subset is $\{7,8,12,13\}$ generates the whole group, therefore,  we could color the remaining edges with $4$ extra colors. This implies that we could give a total coloring of $G$ in $12+4=16$ colors which is exactly $\Delta(G)+2$ colors here, or, $G$ satisfies TCC.  

\end{exmp}

\begin{thm}
If a circulant graph $G$ of even order $n$ has the generating set $S=\{s_i\}$ with $|S|=\frac{n}{2}-2$ such that none of $s_i+s_j\neq \frac{n}{2}, \ s_i,s_j\in S$ and $\frac{n}{2}\notin S$, then  $\chi''(G)\le\frac{n}{2}+1=\Delta(G)+3.$
\end{thm}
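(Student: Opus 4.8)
The plan is to exhibit an explicit total coloring through an \emph{anti-circulant} (Hankel) color matrix, exactly in the spirit of the canonical total coloring introduced above, and then to use the hypotheses to show that $\frac{n}{2}+1=\Delta(G)+3$ colors already suffice. First I would record the structural consequences of the hypotheses. Since $S$ is symmetric, $\frac{n}{2}\notin S$, and $|S|=\frac{n}{2}-2$, the generators split into antipodal pairs $\{s,n-s\}$, so $|S|$ is even; hence $\frac{n}{2}-2$ is even and $n\equiv 0\bmod 4$, say $n=4t$. Writing each generator through its distance $d\in\{1,\dots,2t-1\}$, the condition that no two elements of $S$ sum to $\frac{n}{2}=2t$ is equivalent to saying that for every $j\in\{1,\dots,t-1\}$ the set $S$ contains exactly one of the complementary distances $j$ and $2t-j$, and never the self-paired distance $t=\frac{n}{4}$. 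This one-distance-per-pair fact is what I will lean on.

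Next I would set up the matrix $M$ with $M_{ij}=r_{(i+j)\bmod n}$ for a first row $r\colon\mathbb{Z}_n\to\{1,\dots,\frac{n}{2}+1\}$ to be chosen: the diagonal $M_{ii}=r_{2i}$ carries the vertex colors and the active off-diagonal cells (those with $j-i\in S$) carry the edge colors. Such a matrix is automatically symmetric, and every anti-diagonal $\{(i,j):i+j\equiv m\}$ consists, among its active cells, of a matching of $G$ together with (when $m$ is even) the two antipodal vertices $\tfrac{m}{2}$ and $\tfrac{m}{2}+\tfrac{n}{2}$, which are non-adjacent precisely because $\frac{n}{2}\notin S$. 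Thus \emph{any} choice of $r$ produces color classes that are independent in the total graph, and the coloring is proper exactly when the colors seen at each vertex are distinct, i.e. when for every even $\epsilon$ the values $\{r_{\epsilon+d}:d\in\{0\}\cup S\}$ are pairwise distinct, together with the vertex-adjacency requirement $r_{\epsilon}\neq r_{\epsilon+2s}$ for $s\in S$. Note that each vertex imposes only $2t-1=|\{0\}\cup S|$ values against a palette of $2t+1$, leaving a slack of two.

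The heart of the argument is choosing $r$ to meet these constraints simultaneously. I would assign the vertex colors on the even positions by lifting a proper coloring of $G$ (which needs at most $\Delta(G)+1=2t-1$ colors, and for which antipodal vertices may safely share a color) and then fill the odd positions from the remaining budget; the one-distance-per-pair structure is exactly what prevents the forbidden sets from exhausting the palette, so that two edges incident to a common vertex are never forced onto the same anti-diagonal. \textbf{The main obstacle} is this final verification: because consecutive shifts $\epsilon,\epsilon+2,\dots$ overlap, the per-vertex distinctness conditions are linked into a single coloring problem on the anti-diagonals, and one must show that the hypothesis $s_i+s_j\neq\frac{n}{2}$ — which governs precisely the coincidences $r_{\epsilon+s}=r_{\epsilon+s'}$ produced by a pair of generators — keeps the associated conflict graph $(\frac{n}{2}+1)$-colorable.

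To organize this verification I would treat the two parity classes of anti-diagonals separately, since a generator of distance $d$ only ever moves an even shift $\epsilon$ to a position of parity $d$. For the part of $G$ that is not already pinned down by the vertex colors I would invoke Lemma 1.1 (Stong): as $\mathbb{Z}_n$ is abelian of even order, $G$ is $1$-factorizable, so the residual edges can be consistently assigned the leftover colors factor by factor, in the manner used for the edge-coloring step of Theorem 2.1. Carrying this through — and checking that the sum condition, applied to each such factor, rules out every remaining repeated color at a vertex — would complete the proper total coloring in $\frac{n}{2}+1$ colors and establish the stated bound.
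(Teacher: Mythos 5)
Your framework is set up correctly: the splitting of $S$ into complementary distance pairs (exactly one of $j$, $\frac{n}{2}-j$ per pair, with $\frac{n}{4}\notin S$ and $n\equiv 0 \bmod 4$) is a correct reading of the hypotheses, and your observation that the anti-diagonals of a Hankel matrix $M_{ij}=r_{(i+j)\bmod n}$ are independent sets of the total graph (using $\frac{n}{2}\notin S$ for the two antipodal vertex cells) is also right. But the proof has a genuine gap, and you name it yourself: everything reduces to exhibiting a first row $r$ with $\frac{n}{2}+1$ values such that, for every even $\epsilon$, the $2t-1$ values $r_{\epsilon+d}$, $d\in\{0\}\cup S$, are distinct (plus the vertex--vertex condition), and you never construct such an $r$ or prove one exists. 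The per-vertex ``slack of two'' is only a local count; the constraint sets for consecutive $\epsilon$ overlap heavily, so no greedy or counting argument follows from it. Your proposed rescue via Lemma 1.1 does not close this gap: once you color ``residual'' edges along arbitrary $1$-factors you have abandoned the Hankel structure, so the anti-diagonal independence you established no longer protects you, and Stong's lemma only gives a proper \emph{edge} coloring with $\Delta(G)$ colors --- it says nothing about keeping those edge colors off the colors of their endpoints, which is precisely where the hypothesis $s_i+s_j\neq\frac{n}{2}$ has to do its work. As written, the sum condition is never actually \emph{used} in a verifiable step; it is only asserted that it ``rules out every remaining repeated color.''

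For contrast, the paper's proof avoids this global existence problem entirely by an explicit reassignment. It partitions $V$ into the antipodal pairs $\{i,i+\frac{n}{2}\}$, notes that the halves $\{0,\dots,\frac{n}{2}-1\}$ and $\{\frac{n}{2},\dots,n-1\}$ induce isomorphic subgraphs of $K_{n/2}$, and gives both halves the \emph{same} total coloring of $K_{n/2}$ with $\frac{n}{2}+1$ colors. The hypothesis then enters exactly once: for $g_i,g_j$ in one half with distance $d$, not both $d$ and $\frac{n}{2}-d$ lie in $S$, so a pair that is a non-edge inside the half (its $K_{n/2}$ color unused at all four relevant vertices) is precisely the situation in which the cross edges $g_i(g_j+\frac{n}{2})$ and $g_j(g_i+\frac{n}{2})$ can exist, and these vertex-disjoint cross edges inherit that unused color. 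Your ``one distance per complementary pair'' fact is exactly the combinatorial engine behind this recycling; the missing step in your write-up is converting it into a concrete coloring rather than a constraint-satisfaction problem whose solvability is left unproved.
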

\begin{proof}
We begin by noticing that we can partition the vertices  into $\frac{n}{2}$ independent sets as $\{0,\frac{n}{2}\}, \{1,\frac{n}{2}+1\},\ldots,\{\frac{n}{2}-1, n-1\}$  as $\frac{n}{2}\notin S$. We observe that the induced subgraphs formed by the vertices $\{0,1,\ldots,\frac{n}{2}-1\}$ and $\{\frac{n}{2},\ldots,n-1\}$ are  isomorphic. Now, we totally color the induced subgraphs using the total coloring of a complete graph of order $\frac{n}{2}$ (we give a similar total coloring to the edges between two vertices of the sub graph as in the complete graph). In order to color the edges  joining the two subgraphs, we observe that  the edges missed from the complete graph while coloring each subgraph is then later covered in the joining edges between the subgraphs. This is so possible because  if we have an edge between two vertices, say $g_i$ and $g_j$, we will not have an edge between $g_j$ and $g_i+\frac{n}{2}\pmod{\frac{n}{2}}$ or vice-versa; as this would imply that both $g_j-g_i\pmod{\frac{n}{2}}$ and $g_i+\frac{n}{2}-g_j=\frac{n}{2}-(g_j-g_i)\pmod{\frac{n}{2}}$ belong to the generating set, which is in contradiction to the assumption that we have none of $s_i+s_j\neq \frac{n}{2}$. Therefore, we could give the color to the edge between $g_i$ and $g_j$ which is missed from the complete graph to the edge between $g_i$ and $g_j+\frac{n}{2}\pmod{\frac{n}{2}}$ in $G$. Hence, we could color totally $G$ using the same colors as in the complete graph. Since we require $\frac{n}{2}+1$ colors to totally color a complete graph of order $\frac{n}{2}$, we are done.
\end{proof}
\begin{exmp}
Consider the circulant graph on $\mathbb{Z}_{24}$ with generating set $\{1, 3, 4, 5, 10, 14, 19,\\ 20, 21, 23\}$. We see that it satisfies the conditions of the theorem and hence can be totally colored with $13$ colors.
The total color matrix in this case can be written as in Table 4.
\begin{table}[h]
\begin{adjustbox}{width=\textwidth}
\begin{tabular}{|l|l|l|l|l|l|l|l|l|l|l|l|l|l|l|l|l|l|l|l|l|l|l|l|l|}\hline
   & 0  & 1  & 2  & 3  & 4  & 5  & 6  & 7  & 8  & 9  & 10 & 11 & 12 & 13 & 14 & 15 & 16 & 17 & 18 & 19 & 20 & 21 & 22 & 23 \\\hline
0  & 1  & 8  &   &  9 &  3 & 10 &   &  &   &  & 6  &  &    &    & 2  &    &    &    &    & 11 & 5  & 12 &    & 7  \\\hline
1  & 8  & 2  & 9  &    & 10 & 4  & 11 &    &    &    &    & 7  &    &    &    & 3  &    &    &    &    & 12 & 6  & 13 &    \\\hline
2  &    & 9  & 3  & 10 &    & 11 & 5  & 12 &    &    &    &    & 2  &    &    &    & 4  &    &    &    &    & 13 & 7  & 1  \\\hline
3  & 9  &    & 10 & 4  & 11 &    & 12 & 6  & 13 &    &    &    &    & 3  &    &    &    & 5  &    &    &    &    & 1  & 8  \\\hline
4  & 3  & 10 &    & 11 & 5  & 12 &    & 13 & 7  & 1  &    &    &    &    & 4  &    &    &    & 6  &    &    &    &    & 2  \\\hline
5  & 10 & 4  & 11 &    & 12 & 6  & 13 &    & 1  & 8  & 2  &    &    &    &    & 5  &    &    &    & 7  &    &    &    &    \\\hline
6  &    & 11 & 5  & 12 &    & 13 & 7  & 1  &    & 2  & 9  & 3  &    &    &    &    & 6  &    &    &    & 8  &    &    &    \\\hline
7  &    &    & 12 & 6  & 13 &    & 1  & 8  & 2  &    & 3  & 10 & 11 &    &    &    &    & 7  &    &    &    & 9  &    &    \\\hline
8  &    &    &    & 13 & 7  & 1  &    & 2  & 9  & 3  &    & 4  & 5  & 12 &    &    &    &    & 8  &    &    &    & 10 &    \\\hline
9  &    &    &    &    & 1  & 8  & 2  &    & 3  & 10 & 4  &    & 12 & 6  & 13 &    &    &    &    & 9  &    &    &    & 11 \\\hline
10 & 6  &    &    &    &    & 2  & 9  & 3  &    & 4  & 11 & 5  &    & 13 & 7  & 1  &    &    &    &    & 10 &    &    &    \\\hline
11 &    & 7  &    &    &    &    & 3  & 10 & 4  &    & 5  & 12 & 13 &    & 1  & 8  & 2  &    &    &    &    & 11 &    &    \\\hline
12 &    &    & 2  &    &    &    &    & 11 & 5  & 12 &    &    & 1  & 8  &    & 9  & 3  & 10 &    &    &    &    & 6  &    \\\hline
13 &    &    &    & 3  &    &    &    &    & 12 & 6  & 13 &    &    & 2  & 9  &    & 10 & 4  & 11 &    &    &    &    & 7  \\\hline
14 & 2  &    &    &    & 4  &    &    &    &    & 13 & 7  & 1  &    &    & 3  & 10 &    & 11 & 5  & 12 &    &    &    &    \\\hline
15 &    & 3  &    &    &    & 5  &    &    &    &    & 1  & 8  & 9  &    &    & 4  & 11 &    & 12 & 6  & 13 &    &    &    \\\hline
16 &    &    & 4  &    &    &    & 6  &    &    &    &    & 2  & 3  & 10 &    &    & 5  & 12 &    & 13 & 7  & 1  &    &    \\\hline
17 &    &    &    & 5  &    &    &    & 7  &    &    &    &    & 10 & 4  & 11 &    &    & 6  & 13 &    & 1  & 8  & 2  &    \\\hline
18 &    &    &    &    & 6  &    &    &    & 8  &    &    &    &    & 11 & 5  & 12 &    &    & 7  & 1  &    & 2  & 9  & 3  \\\hline
19 & 11 &    &    &    &    & 7  &    &    &    & 9  &    &    &    &    & 12 & 6  & 13 &    &    & 8  & 2  &    & 3  & 10 \\\hline
20 & 5  & 12 &    &    &    &    & 8  &    &    &    & 10 &    &    &    &    & 13 & 7  & 1  &    &    & 9  & 3  &    & 4  \\\hline
21 & 12 & 6  & 13 &    &    &    &    & 9  &    &    &    & 11 &    &    &    &    & 1  & 8  & 2  &    &    & 10 & 4  &    \\\hline
22 &    & 13 & 7  & 1  &    &    &    &    & 10 &    &    &    & 6  &    &    &    &    & 2  & 9  & 3  &    &    & 11 & 5  \\\hline
23 & 7  &    & 1  & 8  & 2  &    &    &    &    & 11 &    &    &    & 7  &    &    &    &    & 3  & 10 & 4  &    &    & 12\\\hline
\end{tabular}
\end{adjustbox}
\caption{Total Coloring of the circulant graph in Example 3.2.}
\label{table:4}
\end{table}

\end{exmp}

\begin{thm}
If a circulant graph $G$ of even order $n$ has a generating set $S$ such that the following properties hold:\\
(i) It does not have the involution element $\frac{n}{2}.$\\
(ii) It has a subset $M$ of $S$ such that $|M|=\frac{n}{2}-2$ and  $m_i+m_j\neq \frac{n}{2}, \ m_i,m_j\in M.$ \\
(iii) The complement of $M$ in $S$ generates the whole group.\\
Then, the graph $G$ satisfies $\chi''(G)\le\Delta(G)+3$.
\end{thm}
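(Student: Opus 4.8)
The plan is to combine two colourings already in hand, one supplied by Theorem 3.2 and one by the $1$-factorization of Lemma 1.1, on complementary pieces of the generating set, using disjoint colour palettes. Write $\Delta=\Delta(G)=|S|$ (the involution $\frac{n}{2}$ is absent by (i), so the degree equals $|S|$) and decompose $S=M\cup\overline{M}$, where $M$ is the prescribed subset and $\overline{M}=S\setminus M$. First I would apply Theorem 3.2 to the circulant subgraph $H=C_n(M)$: since $\frac{n}{2}\notin M$, $|M|=\frac{n}{2}-2$, and $m_i+m_j\neq\frac{n}{2}$, that result gives a total colouring of $H$ with $\frac{n}{2}+1=|M|+3$ colours, drawn from the canonical $K_{n/2}$ palette.

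The crucial feature I would extract from that construction is its vertex colouring: every vertex $v$ receives the colour attached to $v \bmod \frac{n}{2}$, so two vertices share a colour precisely when they differ by $\frac{n}{2}$. I would then observe that this vertex colouring is already proper for the whole graph $G$, not merely for $H$: if two equally coloured vertices were adjacent in $G$, their difference $\frac{n}{2}$ would lie in $S$, contradicting (i). This is the step where hypothesis (i) does the real work.

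Next I would treat the residual edges, namely those of the circulant subgraph $H'=C_n(\overline{M})$. By (iii) the set $\overline{M}$ generates $\mathbb{Z}_n$, so $H'$ is connected; being a Cayley graph on the abelian group $\mathbb{Z}_n$ of even order, Lemma 1.1 makes it $1$-factorizable, hence properly edge-colourable with $\deg(H')=|\overline{M}|=\Delta-\frac{n}{2}+2$ colours. I would assign these edges an entirely new block of colours, disjoint from the $\frac{n}{2}+1$ colours used on $H$.

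Finally I would verify that the merged assignment is a proper total colouring of $G$: adjacent vertices differ by the previous paragraph; two edges meeting at a vertex are either both from $M$ (handled by Theorem 3.2), or both from $\overline{M}$ (handled by the $1$-factorization), or one of each (resolved by palette disjointness); and an edge never clashes with an incident vertex, again by Theorem 3.2 for $M$-edges and by disjointness for $\overline{M}$-edges. Counting colours gives $(\frac{n}{2}+1)+(\Delta-\frac{n}{2}+2)=\Delta+3$, so $\chi''(G)\le\Delta(G)+3$. The main obstacle, I expect, is the verification in the second paragraph that the vertex colouring borrowed from the $M$-subgraph survives the insertion of the $\overline{M}$-edges; the rest is bookkeeping, provided one checks that $\overline{M}$ genuinely generates the group so that Lemma 1.1 is applicable.
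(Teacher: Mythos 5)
Your proposal is correct and follows essentially the same route as the paper's proof: apply the preceding theorem to the circulant subgraph $C_n(M)$ to obtain a total colouring with $\frac{n}{2}+1$ colours, then use Lemma 1.1 (via hypothesis (iii)) to edge-colour the remaining circulant subgraph $C_n(\overline{M})$ with $\Delta(G)-\frac{n}{2}+2$ fresh colours, giving $\Delta(G)+3$ in total. In fact your write-up is more careful than the paper's, since you explicitly verify that the inherited vertex colouring (constant on pairs differing by $\frac{n}{2}$) remains proper on all of $G$ by hypothesis (i) — a check the paper leaves implicit.
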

\begin{proof}
From the previous theorem, we see that the total chromatic number of the subgraph induced by the generating set $M$ is bounded above by $\frac{n}{2}+1$. Now, as the complement of $M$ in $S$ generates the whole group; From Lemma 1.1, the graph induced by the remaining edges form a Cayley graph and it is 1-factoriziable. Therefore we can color the remaining edges with $\Delta(G)-(\frac{n}{2}-2)=\Delta(G)-\frac{n}{2}+2$ colors. Thus, the number of colors required for the total coloring of $G$ is at most  $\frac{n}{2}+1+\Delta(G)-\frac{n}{2}+2=\Delta(G)+3$ colors.
\end{proof}
\begin{exmp}
Consider the circulant graph on $\mathbb{Z}_{24}$ with generating set

\noindent $\{1,2,3,4,5,7,10,14,17,19,20,21,23\}$.We see that this graph satisfies the conditions of the theorem. Here we have   $M=\{1,3,4,5,10,14,19,20,21,23\}$. From theorem 2.2, we see that the graph induced by the set $M$ requires $13$ colors. The complement of $M$ in $S$, $\{2,7,17,22\}$ has cardinality of $4$, whence the extra colors required are just $4$. Hence the graph can be totally colored with $17$ colors.
\end{exmp}
 
\begin{thm}
 Let $G$ be a circulant graph with $n=2m$ with $m$ odd  and $\Delta(G)> m$. If the generating sets $S$ and $S_1 \subset S$, $|S_1|=m-1$, have the following properties:\\
 (i).  $s_i\not\equiv s_j\pmod{\left\lceil\frac{n}{4}\right\rceil}$ where $s_i,s_j\in S_1$, $\frac{n}{2} \notin S_1 $. \\
 (ii). $S_1$   has a group generator in it.\\
 (iii). Complement of $S_1$ in $S$ generates the whole group.\\
 Then $\chi''_{=}(G)=\Delta(G)+1$ and $\chi_{\Sigma}(G)\le\Delta(G)+3$.
\end{thm}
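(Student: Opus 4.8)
The plan is to build on the decomposition strategy of Theorems 3.1--3.3, splitting the generating set as $S = S_1 \sqcup \overline{S_1}$ (complement taken inside $S$) and treating the two pieces separately. First I would totally color the spanning subgraph $H_1$ induced by $S_1$, which is a circulant graph of degree $m-1$ on $n=2m$ vertices. Since $|S_1| = m-1$ is even and $\tfrac{n}{2} \notin S_1$, the set $S_1$ is symmetric with no involution, so $H_1$ is $(m-1)$-regular; conditions (i) and (ii) are exactly what is needed to write down an explicit proper total coloring of $H_1$ in $m = \Delta(H_1)+1$ colors, i.e.\ a type I coloring. Concretely, using the generator guaranteed by (ii) to fix a Hamiltonian reference and filling the circulant color matrix by the canonical scheme, the congruence condition (i), $s_i \not\equiv s_j \pmod{\lceil n/4 \rceil}$, prevents two edge-bands from clashing along the diagonals, so the shift-by-one circulant pattern remains proper. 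I would also record that this base coloring is vertex-balanced: each of the colors $1,\dots,m$ appears on exactly two of the $2m$ vertices.

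Next I would color the edges induced by $\overline{S_1}$. By (iii) these edges form a Cayley graph on $\mathbb{Z}_n$ whose connection set generates the whole group, so by Lemma 1.1 it is $1$-factorizable and hence properly edge-colorable with exactly $\Delta(G)-(m-1)$ fresh colors $m+1,\dots,\Delta(G)+1$. These colors are disjoint from the palette $\{1,\dots,m\}$ used on $H_1$ and on every vertex, so no new conflicts arise and we obtain a proper total coloring of $G$ with $\Delta(G)+1$ colors; in particular $G$ is type I.

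For the equitable statement I would use the standard counting for a $(\Delta+1)$-total coloring of a $\Delta$-regular graph: at every vertex the full palette occurs exactly once, so each color class $c$ consists of an independent set of $a_c$ vertices together with a perfect matching on the remaining $n-a_c$ vertices, giving class size $\tfrac{1}{2}(n+a_c)$. Because the vertex colors are balanced, with $a_c = 2$ for $c \le m$ and $a_c = 0$ for $c > m$, the class sizes are exactly $m+1$ and $m$, which differ by one; equivalently, deleting the vertices of any color class leaves a graph with a perfect matching. Hence the coloring is equitable and $\chi''_{=}(G) = \Delta(G)+1$.

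Finally, for the neighborhood-sum-distinguishing bound I would follow the mechanism of Theorem 2.2. The generator in $S_1$ supplied by (ii) yields a Hamiltonian cycle of $G$ whose edges split into two rainbow perfect matchings; recoloring these two matchings with two brand-new colors $\Delta(G)+2$ and $\Delta(G)+3$ keeps the total coloring proper and perturbs the weighted sums $\sum_{c}(u)$ so that adjacent vertices receive different sums (consecutive rows/columns of the color matrix already have distinct sums, and the two extra colors break the remaining ties). This uses $\Delta(G)+3$ colors, yielding $\chi''_{\Sigma}(G) \le \Delta(G)+3$. The main obstacle is the first step: producing the explicit proper type I total coloring of $H_1$ and verifying that condition (i) genuinely forbids every diagonal collision in the circulant color matrix, together with checking that the final rainbow-matching recoloring simultaneously stays proper and separates every pair of adjacent neighborhood sums.
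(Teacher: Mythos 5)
Your proposal follows the paper's proof essentially step for step: the same split of $S$ into $S_1$ (totally colored with $m$ colors via the canonical circulant scheme, with condition (i) ruling out clashes along the subdiagonals) and $\overline{S_1}$ (edge-colored with $\Delta(G)-(m-1)$ fresh colors via Lemma 1.1), the same vertex-balanced perfect-matching count for equitability, and the same recoloring of the two rainbow perfect matchings of a Hamiltonian cycle with two extra colors for the neighborhood-sum bound. Your explicit class-size formula $\tfrac{1}{2}(n+a_c)$ is a slightly tidier statement of the paper's equitability argument, but the approach is the same, and the technical step you flag as the main obstacle (verifying the canonical coloring of the $S_1$-subgraph is proper) is handled in the paper only at a comparable level of detail.
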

\begin{proof}
We  first totally color  the subgraph induced by the generating subset $S_1$. For this, we first color the vertices with the canonical colors $1,2,3,\ldots \frac{n}{2}$ which implies that we have two vertices in each color class and the diagonal of the total color matrix is filled in the pattern $1-2-\ldots\frac{n}{2}-1-2\ldots , \frac{n}{2}$. This is possible as $\frac{n}{2}\notin S_1$. Now, to fill the subdiagonals, we use the same pattern as for the diagonals with the starting colors being different. The starting colors (entries in the first row) is chosen as follows:\\

For $s_i<\frac{n}{2}$, the entries corresponding to $s_i$ in the first row is chosen by using  the index of $s_i$ $(s_i \mod{\left\lceil\frac{n}{4}\right\rceil})$ in the canonical total coloring of a complete graph of order $\frac{n}{2}$.  As for the subdiagonals starting from $s_i>\frac{n}{2}$ , we use the colors used for inverses of $s_i\mod{\left\lceil\frac{n}{4}\right\rceil}$ in the canonical total coloring of the complete graph. \\

This ensures that there will be no clashes between the edge colors  (as $s_i\not\equiv s_j\mod{\left\lceil\frac{n}{4}\right\rceil}$). This also ensures that there is no clashes between the vertex and edge colors, because, if there are clashes between vertex and edge colors for $s_i>\frac{n}{2}$, there will be clashes at the colors between the vertices and subdiagonals starting at $s_i<\frac{n}{2}$, which is clearly not possible as $s_i\not\equiv s_j\pmod{\left\lceil\frac{n}{4}\right\rceil}$. Thus, we can give a type I total coloring for this subgraph using $m$ colors. Now, the  complement of $S_1$ in $S$ generates the whole group and by Lemma 1.1, the remaining edges of $G$ are one factoriziable. Therefore $G$ is type I.   \\

Since, in the above total coloring,  we have equitable coloring of vertices  and the graph is type I, each total color class having vertices has a perfect matching of the remaining graph (the graph with the two vertices removed). The total color classes having only edges  have just one  element less than the total color classes with vertices and as such, the graph has an equitable total coloring using $\Delta(G)+1$ colors as desired. Thus, $\chi''_{=}(G)=\chi''(G)=\Delta(G)+1.$ \\

In order to prove that $\chi_{\Sigma}(G)\le\Delta(G)+3$, we first give a $\Delta(G)+1$ total coloring of $G$ as above and then take a  Hamiltonian cycle, (since a group generator is there in the generating set of the graph by property ii) which has two rainbow perfect matchings. We give two extra colors to its edges. This ensures that the sum of colors of neighbors is different. Hence, $\chi_{\Sigma}(G)\le(\Delta(G)+1)+2=\Delta(G)+3$.
\end{proof}

\begin{exmp}
We take the graph $G$ the circulant graph on $\mathbb{Z}_{18}$ with generating set $S=\{1,2,4,6,7,8,10,11,12,14,16,17\}$. Here, $S_1$ is $\{1,2,4,6,12,14,16,17\}$. We first give a $9$-total coloring of the subgraph induced by $S_1$. To give a total $9$-coloring, we use the total color matrix of the canonical coloring of a complete graph of order $9$. This gives us the starting colors for the subdiagonals starting at $1,2,4,6$ to be $6,2,3,4$. correspondingly the subdiagonals starting at $12,14,16,17$ to be $7,8,9,5$ .The part of graph induced by $\{7,8,12,13\}$, which is also a circulant graph, can be given a $4$ coloring ( from Corollary 2.3.1 of \cite {STO}). This gives us a $13$ total coloring of the graph which is also equitable. Now, we take the rainbow Hamiltonian cycle  induced by the set $\{1,19\}$  and give it a $2$- coloring using two extra colors (replacing the original coloring), which is possible as the cycle induced is Hamiltonian (and hence even). This gives us a neighborhood sum distinguishing total coloring using $15$ colors as the colors replaced in each row and/or column happens to be having a distinct sum owing to the rainbow nature of the prior coloring. The total color matrix of the subgraph corresponding to $S_1$ is given in Table 5.\\
The final total color matrix can be obtained by using the Lemma 1.1 and $4$ extra colors to give us  an equitable $13$- coloring of the final graph. \\
\begin{table}[h]
\begin{tabular}{|l|l|l|l|l|l|l|l|l|l|l|l|l|l|l|l|l|l|l|}
\hline
   & 0  & 1  & 2  & 3  & 4  & 5  & 6  & 7  & 8  & 9  & 10 & 11 & 12 & 13 & 14 & 15 & 16 & 17 \\\hline
0  & 1  & 6  & 2  &    & 3  &    & 4  &  &  &    &   &  &7  &    & 8  &    & 9  & 5  \\\hline
1  & 6  & 2  & 7  & 3  &    & 4  &    & 5  &  &  &    &   &  &8  &    & 9  &    & 1  \\\hline
2  & 2  & 7  & 3  & 8  & 4  &    & 5  &    & 6  &  &  &    &   &  &9  &    & 1  &    \\\hline
3  &    & 3  & 8  & 4  & 9  & 5  &    & 6  &    & 7  &  &  &    &   &  &1  &    & 2  \\\hline
4  & 3  &    & 4  & 9  & 5  & 1  & 6  &    & 7  &    & 8  &  &  &    &   &  &2  &    \\\hline
5  &    & 4  &    & 5  & 1  & 6  & 2  & 7  &    & 8  &    & 9  &  &  &    &   &  & 3 \\\hline
6  & 4  &    & 5  &    & 6  & 2  & 7  & 3  & 8  &    & 9  &    & 1  &  &  &    &   &  \\\hline
7  &  & 5  &    & 6  &    & 7  & 3  & 8  & 4  & 9  &    & 1  &    & 2  &  &  &    &   \\\hline
8  &  &  & 6  &    & 7  &    & 8  & 4  & 9  & 5  & 1  &    & 2  &    & 3  &  &  &    \\\hline
9  &    &  &  & 7  &    & 8  &    & 9  & 5  & 1  & 6  & 2  &    & 3  &    & 4  &  &  \\\hline
10 &   &    &  &  & 8  &    & 9  &    & 1  & 6  & 2  & 7  & 3  &    & 4  &    & 5  &  \\\hline
11 &    &   &    &  &  & 9  &    & 1  &    & 2  & 7  & 3  & 8  & 4  &    & 5  &    & 6  \\\hline
12 & 7 &    &   &    &  &  & 1  &    & 2  &    & 3  & 8  & 4  & 9  & 5  &    & 6  &    \\\hline
13 &  & 8 &    &   &    &  &  & 2  &    & 3  &    & 4  & 9  & 5  & 1  & 6  &    & 7  \\\hline
14 & 8  &  &9  &    &   &    &  &  & 3  &    & 4  &    & 5  & 1  & 6  & 2  & 7  &    \\\hline
15 &    & 9  &  & 1 &    &   &    &  &  & 4  &    & 5  &    & 6  & 2  & 7  & 3  & 8  \\\hline
16 & 9  &    & 1  &  &2  &    &   &    &  &  & 5  &    & 6  &    & 7  & 3  & 8  & 4  \\\hline
17 & 5 & 1  &    & 2  &  & 3 &    &   &    &  &  & 6  &    & 7  &    & 8  & 4  & 9 \\\hline
\end{tabular}
\caption{Partial equitable total Coloring of the Circulant graph in Example 3.4.}
\label{table:5}
\end{table}

\newpage
To give  a neighborhood sum distinguishing total coloring using $15$ colors, we modify the prior total color matrix  by replacing the colors of the Rainbow Hamiltonian cycle induced by the set $\{1,19\}$ as in Table 6.
\begin{table}[h]
\begin{adjustbox}{width=\textwidth}
\begin{tabular}{|l|l|l|l|l|l|l|l|l|l|l|l|l|l|l|l|l|l|l|}
\hline
   & 0  & 1  & 2  & 3  & 4  & 5  & 6  & 7  & 8  & 9  & 10 & 11 & 12 & 13 & 14 & 15 & 16 & 17 \\\hline
0  & 1  & 14 & 2  &    & 3  &    & 4  &  &  &    & 6  &  &  &    & 8  &    & 9  & 15 \\\hline
1  & 14 & 2  & 15 & 3  &    & 4  &    & 5  &  &  &    & 7  &  &  &    & 9  &    & 1  \\\hline
2  & 2  & 15 & 3  & 14 & 4  &    & 5  &    & 6  &  &  &    & 8  &  &  &    & 1  &    \\\hline
3  &    & 3  & 14 & 4  & 15 & 5  &    & 6  &    & 7  &  &  &    & 9  &  &  &    & 2  \\\hline
4  & 3  &    & 4  & 15 & 5  & 14 & 6  &    & 7  &    & 8  &  &  &    & 1  &  &  &    \\\hline
5  &    & 4  &    & 5  & 14 & 6  & 15 & 7  &    & 8  &    & 9  &  &  &    & 2  &  &  \\\hline
6  & 4  &    & 5  &    & 6  & 15 & 7  & 14 & 8  &    & 9  &    & 1  &  &  &    & 3  &  \\\hline
7  &  & 5  &    & 6  &    & 7  & 14 & 8  & 15 & 9  &    & 1  &    & 2  & 1 &  &    & 4  \\\hline
8  &  &  & 6  &    & 7  &    & 8  & 15 & 9  & 14 & 1  &    & 2  &    & 3  &  &  &    \\\hline
9  &    &  &  & 7  &    & 8  &    & 9  & 14 & 1  & 15 & 2  &    & 3  &    & 4  &  &  \\\hline
10 & 6  &    &  &  & 8  &    & 9  &    & 1  & 15 & 2  &14 & 3  &    & 4  &    & 5  &  \\\hline
11 &    & 7  &    &  &  & 9  &    & 1  &    & 2  & 14 & 3  & 15 & 4  &    & 5  &    & 6  \\\hline
 &  &    & 8  &    &  &  & 1  &    & 2  &    & 3  & 15 & 4  & 14 & 5  &    & 6  &    \\\hline
 &  &  &    & 9  &    &  &  & 2  &    & 3  &    & 4  & 14 & 5  & 15 & 6  &    & 7  \\\hline
14 & 8  &  &  &    & 1  &    &  &  & 3  &    & 4  &    & 5  & 15 & 6  & 14 & 7  &    \\\hline
15 &    & 9  &  &  &    & 2  &    &  &  & 4  &    & 5  &    & 6  & 14 & 7  & 15 & 8  \\\hline
16 & 9  &    & 1  &  &  &    & 3  &    &  &  & 5  &    & 6  &    & 7  & 15 & 8  & 14 \\\hline
17 & 15 & 1  &    & 2  &  &  &    & 4  &    &  &  & 6  &    & 7  &    & 8  & 14 & 9\\ \hline
\end{tabular}
\end{adjustbox}
\caption{Partial total coloring of the circulant graph in Example 3.4 that is neighborhood sum distinguishing.}
\label{table:6}
\end{table}
The final total color matrix can be obtained by using Lemma 1.1 and $4$ extra colors to give us  a total coloring with $\chi''_{\Sigma}(G)=15$.
\end{exmp}
\newpage
\noindent \textbf{Acknowledgements:} \\
The first author was supported by CSIR Fellowship (File No. 09/942(0020)/2020-EMR-I) .\\
The second and third authors were supported by SERB, India (No.SERB: EMR/2017/001869).

\end{document}